\theoremstyle{plain}
\newtheorem{theorem}{Theorem}
\newtheorem{proposition}[theorem]{Proposition}
\theoremstyle{definition}
\newtheorem*{question}{Question}
\newcommand{\ML}{\mathcal{ML}} 
\newcommand{\AH}{\mathrm{AH}}
\newcommand{\GF}{\mathrm{GF}}
\newcommand{\QF}{\mathrm{QF}}
\newcommand{\SL}{\mathrm{SL}}
\newcommand{\PSL}{\mathrm{PSL}}
\newcommand{\SU}{\mathrm{SU}}
\newcommand{\A}{\mathcal A}
\newcommand{\R}{\mathbb R}
\newcommand{\Z}{\mathbb Z}
\newcommand{\N}{\mathbb N}
\newcommand{\C}{\mathbb C}
\newcommand{\D}{\mathcal D}
\newcommand{\Schw}{\mathcal S}
\newcommand{\T}{\mathcal T}
\newcommand{\U}{\mathcal U}
\newcommand{\Y}{\mathcal Y}
\newcommand{\X}{\mathcal X}
\newcommand{\V}{\mathcal V}
\newcommand{\Q}{\mathcal Q}
\newcommand{\B}{\mathcal B}
\newcommand{\Hom}{\mathrm{Hom}}
\newcommand{\rmd}{\mathrm{d}}
\newcommand{\co}{\colon\thinspace}
\newcommand{\hol}{\mathrm{hol}}
\newcommand{\gr}{\mathrm{gr}}
\newcommand{\Gr}{\mathrm{Gr}}
\begin{document}

\title{\textbf{Slicing, skinning, and grafting}}
\author{David Dumas and Richard P. Kent IV\thanks{Both authors supported by NSF postdoctoral fellowships.}}

\date{March 14, 2008}

\maketitle



\noindent Let $M$ be a compact manifold with boundary. 
If $M$ is connected, let
$\X_\C(M)$
be the $\SL_2(\C)$--character variety of $M$. 
If not, take $\X_\C(M)$ to be the cartesian product of the character varieties of its components. 

Throughout, $S$ is a 
\textit{closed} 
connected 
oriented hyperbolic surface,  $\T(S)$ its Teich-m\"uller space.
By the Uniformization Theorem, $\T(S)$ is both the space of marked conformal structures on $S$ and the space of marked hyperbolic structures on $S$; we blur the distinction between these two descriptions, letting context indicate the desired one.

The variety $\X_\C(S)$ contains the space $\AH(S)$ of hyperbolic structures on $S \times \R$. By the work of A. Marden \cite{marden} and D. Sullivan \cite{sullivanstability}, the interior of $\AH(S)$ is the space of quasifuchsian groups $\QF(S)$, and $\QF(S)$ lies in the smooth part of $\X_\C(S)$---though $\AH(S)$ sits more naturally in the $\PSL_2(\C)$--character variety of $S$, and has many lifts to the variety $\X_\C(S)$, we content ourselves with $\X_\C(S)$, as our arguments apply to any lift considered. 
We refer the reader to \cite{PSL} for a detailed treatment of the $\PSL_2(\C)$--character variety. 
The quasifuchsian groups are parameterized by the product of Teich-m\"uller spaces $\T(S) \times \T(\overline S)$, by the Simultaneous Uniformization Theorem of L. Bers \cite{berssimul},  and the \textbf{Bers slice} $\B_Y$ is the set
\[
\B_Y = \T(S) \times \{ Y \} \subset \X_\C(S) .
\] 
As we will see, a Bers slice is cut out of $\X_\C(S)$ by an analytic subvariety of dimension $-\frac{3}{2}\chi(S)$. It is never cut out by an algebraic subvariety:
\begin{theorem}\label{slice} Let $\V \subset \X_\C(S)$ be a
complex algebraic subvariety of dimension $-\frac{3}{2} \chi(S)$. 
Then the Bers slice $\B_Y$ is not contained in $\V$.
\end{theorem}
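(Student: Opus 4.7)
The plan is to proceed by contradiction: suppose $\B_Y$ is contained in an algebraic subvariety $\V \subset \X_\C(S)$ of complex dimension $d := -\tfrac{3}{2}\chi(S) = \dim_\C\B_Y$.

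First, I would use the Bers embedding to translate the containment into a statement about projective structures. The Bers embedding $\beta_Y \co \T(S) \to Q(\overline Y)$ realizes $\T(S)$ as a bounded open domain in the complex affine space $Q(\overline Y) \cong \C^d$ of holomorphic quadratic differentials on $\overline Y$, and the holonomy map $\hol \co Q(\overline Y) \to \X_\C(S)$ sends this domain biholomorphically onto $\B_Y$. Since $\hol^{-1}(\V)$ is a complex analytic subset of the connected complex manifold $Q(\overline Y)$ containing a non-empty open subset, it must equal all of $Q(\overline Y)$; hence $\hol(Q(\overline Y)) \subset \V$.

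Second, I would leverage grafting to exhibit infinitely many additional Bers slices inside $\V$. Integer-weighted projective grafting preserves holonomy; combined with the Scannell--Wolf theorem---which, for each integer-weighted simple closed multicurve $\lambda$, yields a unique $Y'_\lambda \in \T(S)$ with $\gr_\lambda Y'_\lambda = \overline Y$---this should allow one to transfer the containment $\hol(Q(\overline Y)) \subset \V$ to $\hol(Q(Y'_\lambda)) \subset \V$, and consequently to conclude that the Bers slice $\B_{\overline{Y'_\lambda}}$ (which Bers-embeds into $Q(Y'_\lambda)$) lies in $\V$. Taking $\lambda_n = 2\pi n \gamma$ for a fixed simple closed curve $\gamma$ and varying $n \in \Z_{>0}$, the surfaces $Y'_{\lambda_n}$ are pairwise distinct---tracked, say, by the extremal length of $\gamma$ under iterated grafting---yielding infinitely many distinct Bers slices inside $\V$.

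Third, I would derive the final contradiction by counting components. Distinct Bers slices are disjoint subsets of $\QF(S) \subset \X_\C(S)$ (they project to distinct points under the Simultaneous Uniformization coordinates $\QF(S) \cong \T(S) \times \T(\overline S)$), and each is a $d$-dimensional complex submanifold. If two such slices were contained in the same $d$-dimensional irreducible component $\V_i$ of $\V$, both would be analytically-open subsets of the (connected) smooth locus of $\V_i$ and would therefore meet---contradicting disjointness. Infinitely many Bers slices in $\V$ thus force $\V$ to have infinitely many irreducible components, impossible for an algebraic variety.

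The main obstacle I anticipate is the grafting step. Integer-weighted projective grafting of a Fuchsian projective structure on $Y_0$ produces a structure on the conformal grafting $\gr_\lambda Y_0$, but for non-Fuchsian projective structures the underlying Riemann surface of the grafted structure depends on the whole structure, not merely on its underlying surface. Consequently the step $\hol(Q(\overline Y)) \subset \V \Rightarrow \hol(Q(Y'_\lambda)) \subset \V$ is not entirely formal, and likely requires working inside the full $(6g-6)$-dimensional space $\mathcal P(S)$ of projective structures and exploiting the Kulkarni--Pinkall / Thurston parameterization $\mathcal P(S) \cong \T(S) \times \ML(S)$ together with the Scannell--Wolf theorem to carry the inclusion between fibres.
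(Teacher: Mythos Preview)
Your proposal has two genuine gaps, one of which you flag yourself and one you do not.

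The gap you do not see is in your third step. You assert that two disjoint Bers slices contained in the same irreducible component $\V_i$ would each be open in the connected smooth locus $(\V_i)_s$ ``and would therefore meet.'' That inference is false: a connected space can certainly contain two disjoint nonempty open subsets (take $(0,1)$ and $(2,3)$ in $\R$). What you would need is that each Bers slice is both open \emph{and closed} in $(\V_i)_s$, but a Bers slice is a bounded domain and is decidedly not closed in $\X_\C(S)$. The paper gets closedness by working not with $\B_Y$ but with all of $\hol(\Q(Y))$ and invoking the Gallo--Kapovich--Marden theorem that $\hol\co\Q(Y)\to\X_\C(S)$ is \emph{proper}; properness forces $\hol(\Q(Y))\cap(\V_i)_s$ to be open and closed in the connected manifold $(\V_i)_s$, whence $\hol(\Q(Y))=\V_i$ exactly. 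You never use properness, and without it your component-counting argument collapses.

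The gap you do flag---transferring the containment $\hol(Q(\overline Y))\subset\V$ to $\hol(Q(Y'_\lambda))\subset\V$ via integral grafting---is real and, as you suspect, not repairable along the lines you sketch: $2\pi\Z$--grafting does act on all of $\mathcal P(S)$ preserving holonomy, but it does not carry the fibre $Q(\overline Y)$ to any other single fibre $Q(Y')$. The paper uses grafting quite differently. Having already established $\hol(\Q(Y))=\V$, it observes (via Goldman's characterization of Fuchsian centers together with Tanigawa's theorem) that $\hol(\Q(Y))\cap\T(S)$ consists of infinitely many isolated points---the Fuchsian holonomies $\gr_{n\gamma}^{-1}(Y)$. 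Thus $\V\cap\X_\R(S)$ has infinitely many connected components, contradicting Whitney's theorem that a real algebraic set has only finitely many. So grafting enters not to manufacture new Bers slices inside $\V$, but to exhibit infinitely many real points of $\V$ once $\V=\hol(\Q(Y))$ is known.
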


The proof of the theorem says nothing more about the Zariski closure of $\B_Y$, and, as it is a theorem of W. Goldman \cite{goldmancomponents} that $\Hom\big(\pi_1(S), \SL_2(\C)\big)$, and hence $\X_\C(S)$, is irreducible, it is natural to wonder:

\begin{question} Is $\B_Y$ Zariski--dense in $\X_\C(S)$?
\end{question}

If we drop the requirement that $S$ be closed, and instead ask only that $S$ be of finite volume, we suspect that Theorem \ref{slice} still holds, where $\X_\C(S)$ now denotes the variety of $\SL_2(\C)$--characters of representations that are parabolic on peripheral subgroups.
At a key point, our argument appeals to properness of the holonomy map $\Q(Y) \to \X_\C(S)$ from the space of quadratic differentials to $\X_\C(S)$, see section \ref{proj}, which remains unknown when $S$ is noncompact, and so this becomes an obstacle when attempting to generalize Theorem \ref{slice}.

\bigskip 
\noindent Let $M$ be a compact oriented $3$--manifold with connected incompressible boundary whose interior admits a complete hyperbolic metric.  The Simultaneous Uniformization Theorem admits a generalization due to L. Ahlfors, Bers, Marden, and Sullivan, see \cite{bersparameter}: the minimally parabolic geometrically finite hyperbolic structures on $M^\circ$ compatible with the orientation on $M$ are parameterized by the Teichm\"uller space of $\partial M$. 
There is a map
\[
\GF(M) \cong \T(\partial M) \to \T(\partial M) \times \T(\overline{\partial M}) \cong \QF(\partial M) 
\]
induced by inclusion and given by
\[
X \mapsto (X, \sigma_M(X)).
\]
The map $\sigma_M \co \T(\partial M) \to \T(\overline{\partial M})$ is W. Thurston's \textbf{skinning map}, which arises in his proof of the Geometrization Theorem for Haken Manifolds, see \cite{kent} and \cite{skinmcmullen}.

Theorem \ref{slice} has the following corollary.

\begin{theorem}\label{skin} Let $M$ be a compact oriented $3$--manifold with incompressible boundary of negative Euler characteristic whose interior admits a complete hyperbolic metric without accidental parabolics. Then its skinning map $\sigma_M$ is not constant.
\end{theorem}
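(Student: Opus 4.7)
The plan is to reduce Theorem \ref{skin} to Theorem \ref{slice} by a character-variety argument. I suppose for contradiction that $\sigma_M$ is constant, taking value $Y \in \T(\overline{\partial M})$, and describe the argument when $\partial M$ is connected, writing $S = \partial M$.

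The inclusion $S \hookrightarrow M$ induces a regular map of $\SL_2(\C)$--character varieties
\[
r \co \X_\C(M) \to \X_\C(S),
\]
and under the Ahlfors--Bers--Marden--Sullivan parameterization $\GF(M) \cong \T(\partial M)$ the composition $\T(\partial M) \cong \GF(M) \hookrightarrow \X_\C(M) \xrightarrow{r} \X_\C(S)$ sends $X$ to the character of the quasifuchsian group with Bers data $\big(X, \sigma_M(X)\big)$. The hypothesis $\sigma_M \equiv Y$ forces the image of this composition to be precisely the Bers slice $\B_Y$.

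Next I would invoke Thurston's deformation theory for geometrically finite hyperbolic $3$-manifolds with incompressible boundary and no accidental parabolics: the irreducible component $\X_0 \subset \X_\C(M)$ containing the discrete faithful characters has complex dimension equal to $\dim_\C \T(\partial M) = -\frac{3}{2}\chi(S)$. Since $\GF(M) \subset \X_0$, the Zariski closure $\V$ of $r(\X_0)$ in $\X_\C(S)$ is a complex algebraic subvariety of complex dimension at most $-\frac{3}{2}\chi(S)$ that contains $\B_Y$. If $\dim_\C \V = -\frac{3}{2}\chi(S)$ then Theorem \ref{slice} is contradicted immediately; if $\dim_\C \V < -\frac{3}{2}\chi(S)$ then $\dim_\R \V < \dim_\R \B_Y$ and the containment fails for real-dimensional reasons.

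The main obstacle in this plan is the input from deformation theory --- namely the bound $\dim_\C \X_0 \leq -\frac{3}{2}\chi(\partial M)$ --- and this is precisely where the no-accidental-parabolics hypothesis is used, since extra parabolics would in principle permit deformations enlarging the character variety past the Teichm\"uller dimension count. Once this bound is in hand, the reduction to Theorem \ref{slice} is formal; for disconnected $\partial M$ the same strategy should work, combined with a product version of Theorem \ref{slice} to accommodate the product of Bers slices that appears in the image of $r$.
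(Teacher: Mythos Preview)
For the connected--boundary case your plan is essentially the paper's: restrict characters along $S\hookrightarrow M$, use that the component of $\X_\C(M)$ carrying the hyperbolic structure has complex dimension $-\tfrac{3}{2}\chi(S)$, and observe that $\sigma_M\equiv Y$ would trap $\B_Y$ inside an algebraic set of that dimension, contradicting Theorem~\ref{slice}. Two small corrections: (i) if $\partial M$ has torus components the irreducible component itself has dimension larger than $-\tfrac{3}{2}\chi(S)$; the paper first passes to the subvariety $\X_\C^{\,0}(M)$ where all $\Z\oplus\Z$--subgroups are parabolic, and you should do the same; (ii) the no--accidental--parabolics hypothesis is not what controls the dimension count---it is used to guarantee that the restriction of a geometrically finite structure to $S_j$ is actually quasifuchsian, so that $\sigma_M$ is defined at all.

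For disconnected boundary the paper does something quite different from your suggestion. Rather than proving a product version of Theorem~\ref{slice}, it caps off all nontorus boundary components $S_1,\ldots,S_n$ except $S_0$ by gluing on acylindrical hyperbolic pieces $M_i$ with $\partial M_i\cong S_i$, producing a manifold $N$ with a single nontorus boundary component; Thurston's Geometrization Theorem supplies the required hyperbolic structure on $N$. One then checks that $\sigma_N$ factors through $\sigma_M$, so nonconstancy of $\sigma_N$ (the connected case already handled) forces nonconstancy of $\sigma_M$. Your route---showing that $\B_{Y_1}\times\cdots\times\B_{Y_n}$ cannot sit in a subvariety of dimension $\sum_j -\tfrac{3}{2}\chi(S_j)$---is plausible and the proof of Theorem~\ref{slice} does adapt (product of proper holonomy maps, product of Fuchsian--center sets, Whitney again), but it is genuinely extra work not carried out in the paper. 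The trade--off is that the paper's trick avoids any extension of Theorem~\ref{slice} at the cost of invoking Geometrization for Haken manifolds, whereas your approach stays within character--variety algebra but requires you to actually prove the product statement.
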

A hyperbolic structure on the interior of a compact oriented $3$--manifold $M$ has \textbf{accidental parabolics} if there is an element of the fundamental group of a nontorus component of $\partial M$ that is parabolic in the corresponding Kleinian group.

When the boundary of $M$ is disconnected, the skinning map is defined as follows.
Let $S_1 \cup \cdots \cup S_n$ be the union of the components of $\partial M$ that are not tori.  Each inclusion $S_j \to M$ induces a map
\[
\GF(M) \cong \T(S_1 \cup \cdots \cup S_n) = \T(S_1) \times \cdots \times \T(S_n) \longrightarrow \QF(S_j)
\] 
given by
\[
X \mapsto (X_j, \sigma_j(X)).
\]
The hypothesis that the interior of $M$ admits a hyperbolic metric without accidental parabolics is needed to guarantee that the range of this map lies in $\QF(S_j)$.
The skinning map 
\[
\sigma_M \co \T(S_1 \cup \cdots \cup S_n)  \longrightarrow \T(\overline{S_1}) \times \cdots \times \T(\overline{S_n})
\]
of $M$ is then defined to be
\[
\sigma_M (X) = (\sigma_1(X),  \ldots, \sigma_n(X)).
\]

Thurston's Bounded Image Theorem gives global constraints on $\sigma_M$. 
Namely, if $M$ is acylindrical and satisfies the hypotheses of Theorem \ref{skin}, the image of $\sigma_M$ is bounded, see \cite{kent}.  
C. McMullen proved \cite{skinmcmullen}, under the same hypotheses, that there is a constant $c < 1$ depending on $M$ such that the Teichm\"uller operator norm satisfies $\| \rmd \sigma_M \| < c$ over all of Teichm\"uller space. 
Aside from holomorphicity, McMullen's theorem, and the conclusion of Theorem \ref{skin}, 
little is known concerning the local behavior of $\sigma_M$.
For instance, the following question remains unanswered.
\begin{question} Are skinning maps always open?
\end{question}

\bigskip
\noindent \textbf{Acknowledgments.}  The authors thank Dan Abramovich and Nathan Dunfield for helpful discussions on complex algebraic geometry.

\section{The character variety}

The construction of the character variety discussed here may be found in full detail in \cite{cullershalen}, see also \cite{shalen}.

Let $\{ x_1, \ldots, x_n\}$ be a generating set for $\pi_1(M)$ and let 
\[
\{w_j\}_{j=1}^N = \{ x_{i_1} x_{i_2} \ldots \, x_{i_k} 
\ | \  1 \leq i_1 < i_2 < \cdots < i_k \leq n \} .
\]
For each $j$, we have the function
\[
I_{w_j} \co \Hom(\pi_1(M), \SL_2(\C) ) \to \C
\]
given by
\[
I_{w_j}(\rho) = \mathrm{trace}(\rho(w_j)).
\]
We thus obtain a map
\[
t \co \Hom(\pi_1(M), \SL_2(\C) ) \to \C^N
\]
given by 
\[
t(\rho) = \big( I_{w_1}(\rho), \ldots, I_{w_N}(\rho) \big),
\]
and the image of $t$ is a variety $\X_\C(M)$, called the \textbf{character variety} as it parameterizes the characters of representations of $\pi_1(M)$ into $\SL_2(\C)$.

Note that the set $\X_\R(M)$ of real points of $\X_\C(M)$ contains the image of \linebreak $\Hom(\pi_1(M), \SL_2(\R))$ under $t$.
In fact, it is a theorem of H. Bass, J. Morgan, and P. Shalen that any real character is the character of a representation into $\SL_2(\R)$ or $\SU(2)$---see Proposition III.1.1of \cite{morganshalen}.
The space $\QF(S)$ embeds into the smooth locus of $\X_\C(S)$ and 
contains only characters of discrete faithful representations.
Since the \linebreak Teichm\"uller space is properly embedded in $\X_\C(S)$, it follows that $\T(S) = \QF(S) \cap \X_\R(S)$ is a topological component of $\X_\R(S)$.

The variety $\X_\C(M)$ may be interpreted as the quotient
\[
\Hom(\pi_1(M), \SL_2(\C) ) / \! \! \! / \, \SL_2(\C)
\]
of geometric invariant theory, which means that $\X_\C(M)$ is an affine variety equipped with a regular function
\[
\Hom(\pi_1(M), \SL_2(\C) ) \to \X_\C(M)
\]
that induces an isomorphism
\[
\C[\X_\C(M)] \to \C[\Hom(\pi_1(M), \SL_2(\C) )]^{\SL_2(\C)} 
\]
We warn the reader that defining the variety $\X_\C(M)$ via geometric invariant theory only specifies $\X_\C(M)$ up to birational equivalence, and that we will always use the representative constructed above.

\section{Projective structures}\label{proj}

We refer the reader to \cite{gunning,GKM,tanigawa,shigatanigawa} for more detailed discussions of complex projective structures on Riemann surfaces.

A marked complex projective structure on $S$ is a marked conformal structure together with an atlas of conformal charts taking values in $\C \mathbb P^1$ whose transition functions are restrictions of M\"obius transformations. 
Let $\mathcal P(S)$ denote the space of all marked complex projective structures on $S$ and let 
\[
\pi \co \mathcal P(S) \to \T(S)
\]
denote the map that forgets the projective structure.

The space $\pi^{-1}(Y)$ of complex projective structures with underlying Riemann surface $Y$ may be parameterized by the space $\Q(Y)$ of holomorphic quadratic differentials $\varphi$ on $Y$, which we think of as holomorphic cusp forms on the unit disk---a complex projective structure $P$ on $Y$ has a developing map $d \co \Delta \to \C \mathbb P^1$, and the Schwarzian derivative $\Schw d$ of $d$ is the holomorphic quadratic differential associated to $P$. 

There is a holomorphic embedding
\[
\hol \co \Q(Y) \to \X_\C(S)
\]
sending a projective structure to the character of its holonomy representation, see \cite{kra,kra2}.  
As a quasifuchsian group in $\B_Y$ is the image of the holonomy representation of a projective structure on $Y$, the image of $\hol$ contains $\B_Y$.

We will need the following theorem of D. Gallo, M. Kapovich, and A. Marden, Theorem 11.4.1 of \cite{GKM}.
\begin{theorem}[Gallo--Kapovich--Marden]
The map $\hol$ is proper. \qed
\end{theorem}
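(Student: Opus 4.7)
The strategy is to prove the contrapositive: if $\hol(\varphi_n)$ converges in $\X_\C(S)$, then the sequence $\varphi_n$ subconverges in $\Q(Y)$. Lift characters to representations and pass to a subsequence so that $\rho_n \to \rho_\infty$ in $\Hom(\pi_1(S),\SL_2(\C))$. For each $n$ choose a developing map $d_n \co \Delta \to \C \mathbb P^1$ for the projective structure with Schwarzian $\varphi_n$; it is locally univalent and satisfies $d_n \circ \gamma = \rho_n(\gamma) \circ d_n$ for every $\gamma \in \pi_1(S)$ acting on $\Delta$ via the Fuchsian uniformization of $Y$. Post-composing $d_n$ by a M\"obius transformation preserves $\Schw d_n$ and merely conjugates $\rho_n$, so I would normalize by requiring $d_n$ to send a fixed triple of points in the interior of a fundamental domain to a fixed triple of distinct points of $\C \mathbb P^1$.

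The heart of the proof is an equivariant normal families argument. Fix a compact fundamental domain $K \subset \Delta$ for $\pi_1(S)$. Under the above normalization I would show that the spherical derivatives $d_n^\sharp$ are uniformly bounded on $K$: the three pinned values control $d_n$ near those points, and the Schwarzian equation $\Schw d_n = \tilde\varphi_n$, rewritten as a second-order linear ODE, propagates this control across $K$ with the $\tilde\varphi_n$ as coefficients; unbounded $\tilde\varphi_n$ on $K$ would force the monodromies $\rho_n(\gamma)$ across generators crossing $\partial K$ to blow up, contradicting $\rho_n \to \rho_\infty$. The $\rho_n$-equivariance, together with convergence of $\rho_n$, then spreads uniform bounds to every compact subset of $\Delta$. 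Marty's criterion provides, after passing to a further subsequence, a meromorphic limit $d_\infty \co \Delta \to \C \mathbb P^1$ that is $\rho_\infty$-equivariant and non-constant (by the three-point normalization).

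If $d_\infty$ is locally univalent, then $\varphi_\infty := \Schw d_\infty$ is a holomorphic quadratic differential on $\Delta$, descends to $Y$ by equivariance, and the spherical convergence $d_n \to d_\infty$ upgrades to $C^\infty_{\mathrm{loc}}$ convergence, giving $\varphi_n \to \varphi_\infty$ in $\Q(Y)$ and completing the proof. The main obstacle, indeed the only serious one, is ruling out that $d_\infty$ acquires critical points in the limit, since meromorphic limits of locally univalent maps need not remain locally univalent, and this is precisely the mechanism by which a degenerating family of projective structures could in principle keep its holonomy bounded. I would confront this via Hurwitz's theorem applied to $d_n'$ on small discs: a hypothetical critical point of $d_\infty$ would force each $d_n$ to develop branching of positive degree nearby, which in turn would require the $\rho_n$-orbit structure of a small loop encircling such a point to produce holonomy behavior incompatible with $\rho_n \to \rho_\infty$. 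Making this last step quantitative, rather than sketching it, is where the real work of Gallo--Kapovich--Marden goes.
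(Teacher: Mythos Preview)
The paper does not prove this theorem. It is stated with a terminal \qed\ and attributed to Gallo--Kapovich--Marden (their Theorem~11.4.1), so the paper's ``proof'' is a citation. There is therefore nothing in the paper to compare your argument against; the authors treat properness of $\hol$ as a black box imported from the literature (noting also Tanigawa's earlier partial result for irreducible characters).

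As for the content of your sketch: the normal-families strategy you outline is the natural first idea, and you are candid that the substantive difficulty is ruling out degeneration of the developing maps---either blow-up of the Schwarzians on a fundamental domain, or loss of local univalence in the limit. Your heuristic that ``unbounded $\tilde\varphi_n$ on $K$ would force the monodromies $\rho_n(\gamma)$ to blow up'' is precisely the step that is \emph{not} automatic and is where the real work lies; bounded holonomy does not by itself give a uniform bound on $\|\varphi_n\|$ without further argument. So what you have written is a plausible outline together with an honest acknowledgment that the key estimate is missing, which is fine as a reading of the statement but is not a proof. In any case, the paper itself makes no attempt at this and simply invokes the result.
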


\noindent Prior to the proof of this theorem, it was shown by H. Tanigawa that $\hol$ maps properly into the subset of $\X_\C(S)$ consisting of the irreducible characters \cite{tanigawaDiv}. It is worth noting that the map 
$
\hol \co \mathcal P(S) \to \X_\C(S)
$
is \textit{not} proper, see \cite{hejhal}.

As mentioned in the introduction, it is not known if this theorem holds whenever $S$ has finite volume.

\section{Grafting}

Let $X$ be a point in $\T(S)$. We let $\ML_{2\pi \Z}(S)$ denote the set of multicurves on $S$ with weights in $2 \pi \N$. 
Let $\lambda$ be in this set, realized geodesically in $X$, and let $\lambda_1$, $\ldots \ $, $\lambda_n$ be the components of $\lambda$ with weights $w_1$, $\ldots \ $, $w_n$.
We create a Riemann surface $\gr_\lambda(X)$ by cutting $X$ open along $\lambda$ and inserting the union of flat annuli
\[
\bigcup_{i=1}^n \ \lambda_n \times [0,w_n]
\]
and say that we have \textbf{grafted} $X$ along $\lambda$.

We need the following theorem of Tanigawa.

\begin{theorem}[Tanigawa \cite{tanigawa}] Let $\lambda$ be an element of $\ML_{2 \pi \Z}(S)$.  The map 
\[
\gr_\lambda \co \T(S) \to \T(S)
\]
is a diffeomorphism. \qed
\end{theorem}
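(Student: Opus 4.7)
The strategy is to show $\gr_\lambda$ is a proper local diffeomorphism of $\T(S)$; since $\T(S)$ is a cell, this forces $\gr_\lambda$ to be a diffeomorphism. Smoothness is built into the construction: the geodesic representative of $\lambda$, the hyperbolic lengths $\ell_X(\lambda_i)$, and the conformal welding that inserts the flat annuli all depend real-analytically on $X$.

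For properness, I would exploit the canonical complex projective structure $P_X \in \mathcal P(S)$ carried by $Y = \gr_\lambda(X)$: performing the same cut-and-insert construction in $\C \mathbb P^1$ via the Fuchsian developing map of $X$ produces a $(\PSL_2(\R), \C \mathbb P^1)$-structure on $\gr_\lambda(X)$ whose holonomy is a lift of the Fuchsian representation $\rho_X$. Suppose $X_n \to \infty$ in $\T(S)$ while $Y_n = \gr_\lambda(X_n)$ converges to some $Y \in \T(S)$. Then $\hol(P_{X_n}) = \rho_{X_n}$ escapes every compact subset of $\X_\C(S)$, since $\T(S)$ is properly embedded there. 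Meanwhile, the Schwarzians $\varphi_n = \Schw(P_{X_n}) \in \Q(Y_n)$ are controlled by the weights of $\lambda$ and the conformal geometry of $Y_n$, hence bounded. Trivializing $\Q \to \T(S)$ locally near $Y$, this contradicts the Gallo--Kapovich--Marden properness of $\hol \co \Q(Y) \to \X_\C(S)$ on each fiber.

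For local injectivity, consider the real-analytic map $\Gamma \co X \mapsto P_X$, which lifts $\gr_\lambda$ through $\pi \co \mathcal P(S) \to \T(S)$ and satisfies $\hol \circ \Gamma = \iota$, where $\iota$ is the Fuchsian embedding $\T(S) \hookrightarrow \X_\C(S)$. Because $\iota$ is an embedding and $\hol$ is holomorphic with generically trivial kernel, $\Gamma$ is itself an immersion; local injectivity of $\gr_\lambda = \pi \circ \Gamma$ is then equivalent to transversality of $d\Gamma(T_X\T(S))$ to the vertical tangent space $\ker d\pi = T_{P_X}\Q(Y_0)$. To establish this, I would differentiate the Euclidean-strip insertion with respect to $X$ to obtain an explicit Beltrami-differential expression for $d\Gamma(v)$, and then verify, via a direct computation on each flat cylinder together with a maximum principle on the hyperbolic part, that the horizontal component of $d\Gamma(v)$ in $T_{Y_0}\T(S)$ is nondegenerate.

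A proper local diffeomorphism between connected manifolds of equal dimension is a covering map, and the simple connectivity of $\T(S)$ upgrades this to a diffeomorphism. The main obstacle is the local injectivity step: establishing transversality of $\Gamma$ to the fibers of $\pi$ appears to require a delicate infinitesimal analysis of the grafting operation, whereas the properness step is powered by the already-cited Gallo--Kapovich--Marden theorem.
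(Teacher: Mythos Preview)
The paper does not prove this statement: Tanigawa's theorem is quoted with a terminal \qed\ and used as a black box, so there is no in-paper argument to compare your proposal against. Your overall architecture --- proper local diffeomorphism of a cell, hence a global diffeomorphism --- is the standard one and is, in outline, Tanigawa's own strategy.

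The properness half of your sketch, however, does not work as written. The whole argument rests on the assertion that ``the Schwarzians $\varphi_n$ are controlled by the weights of $\lambda$ and the conformal geometry of $Y_n$, hence bounded,'' and you give no justification for it. In fact, the tools you have assembled point the other way: since $\hol\circ\Gamma$ is the proper Fuchsian embedding and $\hol$ is continuous, $\Gamma$ is itself proper; so under your counterfactual hypothesis $X_n\to\infty$, $Y_n\to Y$, one gets $\Gamma(X_n)=(Y_n,\varphi_n)\to\infty$ in $\mathcal P(S)$, which forces $\varphi_n\to\infty$ rather than $\varphi_n$ bounded. Any direct, non-circular bound on $\varphi_n$ in terms of $\lambda$ and $Y_n$ alone would therefore already \emph{be} the properness statement, and you have not supplied one. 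Note also that the appeal to Gallo--Kapovich--Marden is misplaced: their theorem gives the implication ``bounded holonomy on a fixed fiber $\Rightarrow$ bounded Schwarzian,'' whereas the step you actually use (bounded $(Y_n,\varphi_n)$ $\Rightarrow$ bounded holonomy) is just continuity of $\hol$. Tanigawa's own properness argument proceeds by direct length and modulus estimates on the grafted surface rather than through the Schwarzian.

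For local injectivity you correctly reduce to transversality of $d\Gamma(T_X\T(S))$ to $\ker d\pi$, and you are right that this is the crux; but ``differentiate the cylinder insertion, then apply a maximum principle on the hyperbolic part'' is a plan rather than a proof, and carrying it out is where the genuine analytic work in Tanigawa's paper lies.
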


There is a projective version of the grafting procedure due to B. Maskit \cite{maskit} that begins with the data $X$ in $\T(S)$ and $\lambda$ in $\ML_{2\pi \Z}(S)$ and produces a projective structure $\Gr_\lambda(X)$ on $S$,
 see \cite{kamishima,tanigawa}. 
This procedure is natural in the sense that the following diagram commutes.
\begin{equation}\label{diagram}
 \xymatrix{\T(S) \ar@/^2.1pc/[rr]|{\mathrm{\, id\, }}    \ar[rd]_{\ \ \gr_\lambda} \ar[r]^{\! \! \Gr_\lambda} 
 &   \mathcal P(S) \ar[d]^{\pi} \ar[r]^{\! \! \! \! \! \! \! \! \hol} & \X_\C(S)   \\
 & \T(S)  
    } 
\end{equation}
More generally, both types of grafting may be performed along any measured geodesic lamination, as shown by Thurston, though we warn the reader that in the general setting the identity map in the diagram becomes the ``$\lambda$--bending map," see \cite{earthquakemcmullen} and \cite{epsteinmarden}.

There are special projective structures---the ones with Fuchsian holonomy---called \textbf{Fuchsian centers}.
W. Goldman has characterized these in terms of grafting.

\begin{theorem}[Goldman \cite{goldman}] Every Fuchsian center is 
$\Gr_\lambda(X)$ for some $\lambda$ in $\ML_{2\pi \Z}(S)$ and $X$ in $\T(S)$.
\qed
\end{theorem}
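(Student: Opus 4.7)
The plan is to recover both $X$ and $\lambda$ directly from the Fuchsian center $P$. Let $(d,\rho)$ be its developing pair, so that $\rho\co\pi_1(S)\to\mathrm{PSL}_2(\R)$ is discrete and faithful and preserves the upper half plane $\Delta\subset\C\mathbb P^1$, yielding a canonically determined Riemann surface $X = \Delta/\rho(\pi_1(S))\in\T(S)$. The candidate multicurve will be read off from the preimage $L = d^{-1}(\partial\Delta)\subset\tilde S$, which is $\pi_1(S)$-invariant by equivariance of $d$.

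First I would prove that $L$ is a $1$-submanifold whose components are properly embedded lines, each stabilized by a maximal cyclic subgroup of $\pi_1(S)$, and that the quotient $\ell = L/\pi_1(S)$ is a finite collection of disjoint essential simple closed curves on $S$. The local structure of $L$ follows from the fact that $d$ is a local homeomorphism onto a neighborhood of $\partial\Delta\subset\C\mathbb P^1$; properness and essentiality rely on the discreteness and faithfulness of $\rho$. Next, partition $\tilde S\setminus L$ into ``upper'' components mapped into $\Delta$ by $d$ and ``lower'' components mapped into $\Delta^* = \C\mathbb P^1\setminus\overline\Delta$. Assembling the upper components via the $\pi_1(S)$-action realizes the complement of $\ell$ in $X$ as a hyperbolic subsurface of $P$; the closure of each lower component descends to an annular neighborhood $A_\gamma$ in $S$ of a component $\gamma$ of $\ell$.

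The integer weights emerge at the final step: the universal cover of $A_\gamma$ maps equivariantly into $\Delta^*$ via the loxodromic $\rho(\gamma)\in\mathrm{PSL}_2(\R)$, so $A_\gamma$ is a degree-$n_\gamma$ cover of the quotient annulus $\Delta^*/\langle\rho(\gamma)\rangle$ for some positive integer $n_\gamma$. In the Euclidean cylinder metric provided by the projective structure on $A_\gamma$, this forces $A_\gamma$ to be a flat cylinder of height $2\pi n_\gamma$, exactly the shape of the cylinder inserted by projective grafting along $2\pi n_\gamma\cdot\gamma$. Setting $\lambda = \sum 2\pi n_\gamma\cdot\gamma\in\ML_{2\pi\Z}(S)$ and appealing to the commutativity of diagram (\ref{diagram}) then identifies $P$ with $\Gr_\lambda(X)$.

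The main obstacle will be the structural analysis of $L$: ruling out circle components, confirming maximality of the stabilizers, and proving that each resulting curve on $S$ is essential and simple. This uses the global structure of $\rho$ as a Fuchsian representation in an essential way, and is also where the wrapping integer $n_\gamma$ emerges canonically from the covering behavior of $d$ along each annular region.
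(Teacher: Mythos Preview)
The paper does not give a proof of this statement; it is quoted from Goldman's paper \cite{goldman} and closed immediately with a \qed. So there is no in-paper argument to compare your proposal against. That said, your outline is essentially Goldman's own: he pulls back the circle $\partial\Delta$ under the developing map, shows that the resulting $\pi_1(S)$--invariant $1$--manifold descends to a multicurve on $S$, decomposes the projective surface into the hyperbolic pieces (mapping to $\Delta$) and the annular pieces (mapping to the complementary disk), and extracts the integer weights from the wrapping degree of the developing map on each annular piece. Your identification of the delicate step---the structural analysis of $L$ and the emergence of the integers $n_\gamma$---is accurate, and is exactly where the substance of Goldman's argument lies.
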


The picture in Figure \ref{figure}, drawn by the first author, shows us some of the Fuchsian centers in a particular $\Q(Y)$.

\begin{figure}
\begin{center}
\fbox{\includegraphics[scale=0.55]{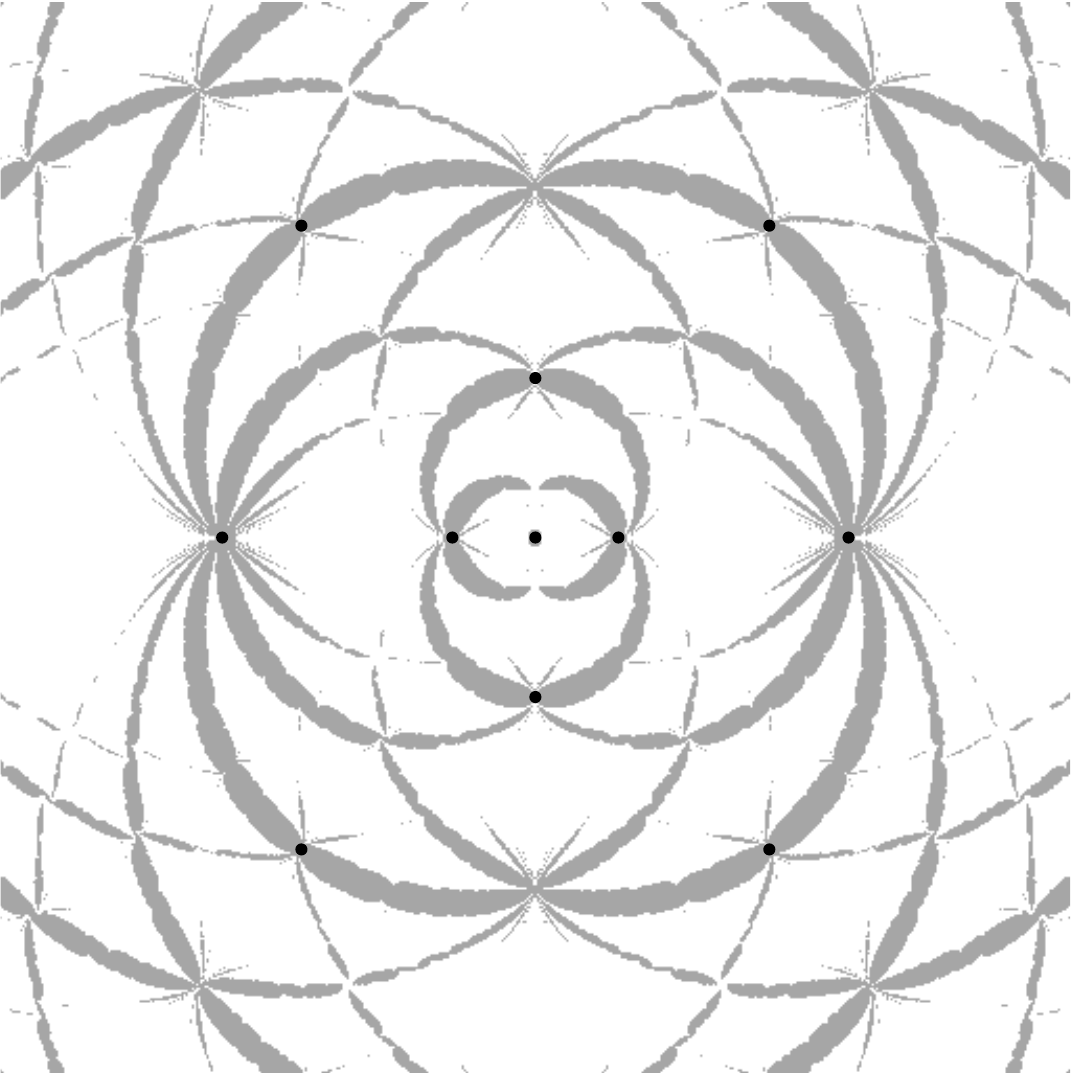}}
\caption{Part of the space of quadratic differentials on a square punctured torus $Y$, with $\hol^{-1}(\QF(S))$ in gray, and Fuchsian centers in black.}\label{figure}
\end{center}
\end{figure}

\bigskip \noindent To enumerate the Fuchsian centers in $\Q(Y)$, one proceeds as follows.

Tanigawa's theorem provides a map
\[
\Phi \co \ML_{2\pi \Z}(S) \to \Q(Y)
\]
defined by 
\[
\Phi(\lambda) = \Gr_\lambda ( \gr_\lambda^{-1} (Y) ),
\]
and Goldman's theorem with Tanigawa's and the diagram (\ref{diagram}) implies that 
\begin{equation}\label{intersection} 
\hol(\Q(Y)) \cap \T(S) = \hol \circ \Phi(\ML_{2\pi \Z}(S)) .
\end{equation}
To see this, note that 
$
\hol(\Q(Y)) \cap \T(S) \subset \X_\C(S)
$
is precisely the set of holonomy representations of Fuchsian centers in $\Q(Y)$. Goldman's theorem says that any Fuchsian center is $\Gr_\lambda(X)$ for some $\lambda$ and $X$, and if $\Gr_\lambda(X)$ is a projective structure on $Y$, then $X = \gr_\lambda^{-1}(Y)$, by (\ref{diagram}), and (\ref{intersection}) follows.

\begin{proposition}\label{infinite} The set $\, \hol(\Q(Y)) \cap \T(S)$ contains infinitely many points.
\end{proposition}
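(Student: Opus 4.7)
The plan is to use the identification $\hol(\Q(Y)) \cap \T(S) = \hol \circ \Phi(\ML_{2\pi\Z}(S))$ from equation (\ref{intersection}). Since the holonomy of a Fuchsian center $\Gr_\lambda(X)$ is just the Fuchsian holonomy of $X$, we have $\hol \circ \Phi(\lambda) = \gr_\lambda^{-1}(Y) \in \T(S) \subset \X_\C(S)$. The proposition therefore reduces to exhibiting infinitely many distinct points in the set $\{\gr_\lambda^{-1}(Y) : \lambda \in \ML_{2\pi\Z}(S)\}$.

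To produce such an infinite set, I would fix a single simple closed curve $\gamma$ on $S$, set $\lambda_n = 2\pi n\gamma$, and let $X_n = \gr_{\lambda_n}^{-1}(Y)$. By the definition of grafting, $Y$ is obtained from $X_n$ by excising the geodesic representative of $\gamma$ (of $X_n$-length $\ell_{X_n}(\gamma)$) and inserting a flat cylinder of height $2\pi n$ with this hyperbolic length as its circumference. The inserted cylinder sits inside $Y$ as an embedded conformal annulus of modulus $2\pi n / \ell_{X_n}(\gamma)$ whose core is freely homotopic to $\gamma$.

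Applying the standard inequality between the modulus of any embedded annulus and the extremal length of its core then gives
\[
\mathrm{ext}_Y(\gamma) \;\le\; \frac{\ell_{X_n}(\gamma)}{2\pi n}.
\]
Because $\mathrm{ext}_Y(\gamma)$ is a fixed positive quantity, this forces $\ell_{X_n}(\gamma) \to \infty$ as $n \to \infty$, so the lengths $\ell_{X_n}(\gamma)$ take infinitely many distinct values and the points $X_n \in \T(S)$ are infinite in number.

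I do not foresee any serious obstacle: the argument uses only the grafting definition and the classical modulus/extremal-length comparison. A more structural alternative would combine properness of $\hol$ (the Gallo--Kapovich--Marden theorem quoted above) with Thurston's parameterization of $\mathcal P(S)$ to observe directly that $\Phi(\lambda_n)$ escapes every compact subset of $\Q(Y)$, forcing its $\hol$-image to escape every compact subset of $\X_\C(S)$; but the extremal-length computation above is more elementary and avoids any appeal to the full Thurston parameterization.
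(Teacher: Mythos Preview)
Your argument is correct and essentially identical to the paper's: both reduce via (\ref{intersection}) to the points $X_n=\gr_{2\pi n\gamma}^{-1}(Y)$, and both use the modulus of the grafted cylinder together with positivity of $\ext_Y(\gamma)$ to force $\ell_{X_n}(\gamma)\to\infty$. The only cosmetic difference is that the paper phrases the extremal-length step as a proof by contradiction rather than writing the explicit inequality.
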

\begin{proof} By (\ref{intersection}), it suffices to show that the image of $\hol \circ \Phi$ is infinite.

Let $\{ n \}$ be a strictly increasing sequence in $2 \pi  \N$.
Let $\gamma$ be an essential simple closed curve and consider the sequence of projective structures $\Phi(n \gamma)$. 
Let $\gamma_n$ be the geodesic representative of $\gamma$ in $X_n = \gr_{n \gamma}^{-1} (Y)$.

The hyperbolic length $\ell_{X_n}(\gamma_n)$ of $\gamma_n$ in $X_n$ is tending to infinity.
To see this, note that if $\ell_{X_n}(\gamma_n)$ were bounded,
the conformal moduli of the annuli 
\[
\gamma_n \times [0,2\pi n] \subset Y = \gr_{n \gamma}(X_n)
\]
would be unbounded (see chapter one of \cite{ahlfors}), implying that the extremal length of $\gamma$ in $Y$ is zero, which is absurd.

So the $X_n$ are leaving the Teichm\"uller space.
Since $\hol \circ \Gr_{n \gamma}(X_n) = X_n$, the proof is complete. 
\end{proof}

In fact, by Thurston's theorem that $\Gr \co \ML(S) \times \T(S) \to \mathcal P(S)$ is a homeomorphism \cite{kamishima}, the function $\Phi$ is injective, see \cite{dumas}, though we will not need this here.

\section{Algebraic versus analytic geometry}

Let $\D$ be a domain in $\C \mathbb P^n$. A set $\A \subset \D $ is a \textbf{locally analytic set} if each point $a$ in $\A$ has a neighborhood $\U$ such that $\A \cap \U$ is the common set of zeros of a finite collection of holomorphic functions on $\U$. A locally analytic set in $\D$ is an \textbf{analytic set} in $\D$ if it is closed there.

The following theorem is well known.

\begin{theorem}\label{smooth} The set of smooth points of an irreducible complex affine or projective algebraic variety is connected in the classical topology.
\end{theorem}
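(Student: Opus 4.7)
My plan is to deduce the theorem from the principle that a proper closed complex analytic subset of a connected complex manifold cannot disconnect it, by reducing to the smooth case via normalization. Write $V^{\mathrm{sm}}$ for the smooth locus of $V$ and $\mathrm{Sing}(V) = V \setminus V^{\mathrm{sm}}$, and let $\pi\co \widetilde V \to V$ be the normalization. Since $V$ is irreducible, so is $\widetilde V$; and since every smooth point of a variety is normal, $\pi$ restricts to an isomorphism over $V^{\mathrm{sm}}$, inducing a biholomorphism in the classical topology. In particular, $\pi^{-1}(V^{\mathrm{sm}})$ is an open subset of the smooth locus $\widetilde V^{\mathrm{sm}}$, classically homeomorphic to $V^{\mathrm{sm}}$.

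It therefore suffices to show that the open subset $\pi^{-1}(V^{\mathrm{sm}}) \subset \widetilde V^{\mathrm{sm}}$ is connected in the classical topology, and this I would accomplish in two steps. First, I would observe that $\widetilde V^{\mathrm{sm}}$ is itself classically connected: it is a smooth irreducible complex algebraic variety, and the classical connectedness of any irreducible complex algebraic variety is a standard theorem, proved for instance by projecting via Noether normalization onto $\C^n$ and analyzing the fibers of the resulting finite surjection. Second, the complement $\widetilde V^{\mathrm{sm}} \setminus \pi^{-1}(V^{\mathrm{sm}}) = \widetilde V^{\mathrm{sm}} \cap \pi^{-1}(\mathrm{Sing}(V))$ is a proper closed complex analytic subset of the complex manifold $\widetilde V^{\mathrm{sm}}$, and hence has real codimension at least two. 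Removing such a subset from a connected complex manifold cannot disconnect it: any continuous path in $\widetilde V^{\mathrm{sm}}$ between two points of $\pi^{-1}(V^{\mathrm{sm}})$ can be perturbed transversally off a real codimension-two closed subset, and complex manifolds are locally path-connected.

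The main obstacle is conceptual rather than computational: one must justify, without circularity, the classical connectedness of an arbitrary irreducible complex variety, and one must verify that normalization behaves correctly over smooth points so that the reduction to $\widetilde V^{\mathrm{sm}}$ is genuine. Both facts are standard and can be found in the usual references on complex algebraic geometry; once they are granted, the theorem follows immediately from the codimension principle above.
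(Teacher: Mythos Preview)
Your argument is sound, but it follows a different path from the paper and carries a redundant step. The paper works entirely in the projective analytic setting: after passing to projective completions, one supposes the smooth locus splits as $\U \sqcup \mathcal W$, observes these are locally analytic sets, applies the Remmert--Stein extension theorem to see that the classical closures $\overline{\U}$ and $\overline{\mathcal W}$ are analytic subsets of projective space, and then invokes Chow's theorem (or GAGA) to conclude they are algebraic, so that $\V = \overline{\U} \cup \overline{\mathcal W}$ contradicts irreducibility. No normalization, no Noether projection, and no transversality-to-codimension-two argument appear.

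In your version, the real engine is the black-box fact that any irreducible complex algebraic variety is classically connected. Once you grant that, the normalization detour does nothing: $V^{\mathrm{sm}}$ is already a nonempty Zariski-open subset of the irreducible variety $V$, hence itself irreducible, and your black box applied directly to $V^{\mathrm{sm}}$ finishes the proof in one line. Passing to $\widetilde V$ and then to $\widetilde V^{\mathrm{sm}}$ only reproduces the same situation---$\widetilde V^{\mathrm{sm}}$ is again the smooth locus of an irreducible variety, so asserting its connectedness is the same assertion you began with, and it is only the independent Noether-normalization argument (applied, strictly speaking, to an affine cover rather than to the non-affine open $\widetilde V^{\mathrm{sm}}$) that breaks the apparent circularity. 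So your proof is correct, but the normalization and the codimension-two step are not what is carrying it; what buys you the theorem is precisely the lemma you flag as the ``main obstacle,'' and that lemma already subsumes the statement.
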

\begin{proof} By passing to projective completions, it suffices to prove the theorem in the projective case.

Suppose to the contrary that $\V$ is an irreducible projective variety whose set of smooth points $\V_s$ is disconnected and write $\V_s = \U \sqcup \mathcal W$ with $\U$ and $\mathcal W$ nonempty open sets. 

The sets $\U$ and $\mathcal W$ are locally analytic sets.  Since the singular locus of $\V$ is an analytic set of dimension less than that of $\V$ 
(see Chapter II.1.4 of \cite{shafarevich1}, for example)
it follows from a theorem of R. Remmert and K. Stein \cite{remmertstein} (see also Chapter III of \cite{fromholo}) that the closures $\overline{\U}$ and $\overline{\mathcal W}$ of $\U$ and $\mathcal{W}$ in the classical topology are analytic sets.

A theorem of W.-L. Chow states that analytic sets in projective space are in fact algebraic sets \cite{chow}, and so $\V = \overline{\U} \cup \overline{\mathcal W}$ is a nontrivial union of proper algebraic subsets, contradicting the irreducibility of $\V$.
\end{proof}

\noindent
The use of Chow's theorem in the proof may be replaced with an application of the much stronger GAGA Principle of J.-P. Serre \cite{gaga}.

\section{Slicing}

\begin{proof}[Proof of Theorem \ref{slice}] Suppose to the contrary that $\V \subset \X_\C(S)$ is a subvariety of dimension $-\frac{3}{2} \chi(Y)$ containing $\B_Y$,
and let $\V_s$ denote the smooth part of $\V$. 

The complex dimension of $\Q(Y)$ is that of $\V$, and since $\hol(\Q(Y))$ contains $\B_Y$,  holomorphicity implies that $\hol(\Q(Y)) \subset \V$.
In fact, this demonstrates that $\hol(\Q(Y))$ must lie in an irreducible component of $\V$, and so we assume that $\V$ is irreducible.   

Since $\hol$ is proper and holomorphic, the intersection $\hol(\Q(Y)) \cap \V_s$ is a properly embedded codimension--zero submanifold of $\V_s$.
Theorem \ref{smooth} tells us that $\V_s$ is connected, and so 
\[
\hol(\Q(Y)) \cap \V_s = \V_s .
\]
Properness implies that $\hol(\Q(Y))$ is closed and since $\V_s$ is dense in $\V$ (see page 124 of \cite{shafarevich2}), we have
\[
\hol(\Q(Y)) = \V .
\]

By proposition \ref{infinite},
\[
\V \cap \T(S) = \hol(\Q(Y)) \cap \T(S) \subset \X_\R(S)
\]
is a countable set of infinitely many points. 
Since $\T(S)$ is a topological component of $\X_\R(S)$, we conclude that $\V \cap \X_\R(S)$ has infinitely many topological components. 
But $\V \cap \X_\R(S)$ is a real algebraic set, which is permitted only a finite number of components, thanks to a theorem of H. Whitney \cite{whitney}.
\end{proof}

Note that the same argument shows that for any open set $\U$ in $\Q(Y)$, the set $\hol(\U)$ is not contained in any subvariety of dimension  $-\frac{3}{2} \chi(Y)$.

\section{Skinning}

\begin{proof}[Proof of Theorem \ref{skin}] Let $\Y_\C^{\, 0}(M)$ denote an irreducible component of $\X_\C(M)$ containing a complete hyperbolic structure on the interior of $M$. 
Let $\X_\C^{\, 0}(M) \subset \Y_\C^{\, 0}(M)$ be the subvariety obtained by demanding all $\Z \oplus \Z$--subgroups to be parabolic.

Let $\partial_0 M$ be the union of the nontorus components of $\partial M$.
Then the complex dimension of $\X_\C^{\, 0}(M)$ is $-\frac{3}{2} \chi(\partial_0 M)$.

First suppose that $\partial_0 M$ is connected.  
The inclusion $\partial_0 M \to M$ induces a regular function
\[
\iota \co \X_\C^{\, 0}(M) \to \X_{\C}(\partial_0 M).
\]
Since regular functions are closed with respect to the Zariski topology, the image $\iota(\X_\C^{\, 0}(M))$ is an algebraic subvariety of $\X_{\C}(\partial_0 M)$.
If $\sigma_M$ were a function identically equal to $Y$, then the Bers slice $\B_Y$ would be contained in $\iota  (\X_\C^{\, 0}(M))$, contradicting Theorem \ref{slice}.

Suppose now that $\partial_0 M$ is disconnected with components $S_0,\ldots,S_n$. 
 For each $i \geq 1$, let $M_i$ be a compact orientable irreducible atoroidal acylindrical $3$--manifold with incompressible boundary homeomorphic to $S_i$, and attach the $M_i$ to the $S_i$ along their boundaries to obtain a manifold $N$.
 This manifold is orientable, irreducible, atoroidal, and any properly embedded essential cylinder is disjoint from the tori in $\partial N$.  
 By Thurston's Geometrization Theorem for Haken manifolds, see \cite{morgansurvey,otal,kapovichbook}, its interior admits a hyperbolic structure without accidental parabolics.
 
The skinning map of $N$ factors 
\[
 \xymatrix{\T(\partial_0 N) \ar[d]_{} \ar[r]^{\sigma_N} &   \T(\overline{\partial_0 N}) \\
  \T(\partial_0 M) \ar[r]_{\sigma_M} & \T(\overline{\partial_0 M}) \ar[u]
  } 
\]
where the vertical map on the left is the map
$
\GF(N) \to \GF(M)
$
induced by inclusion; the one on the right simply projection. We have shown that the skinning map of $N$ is not constant, and it follows that $\sigma_M$ is nonconstant.
\end{proof}

\bibliographystyle{plain}
\bibliography{skinacat}

\bigskip

\noindent Department of Mathematics, Brown University, Providence RI 02912 
\newline \noindent  \texttt{ddumas@math.brown.edu}  , \quad \texttt{rkent@math.brown.edu}

\end{document}